\newtheorem{theorem}{Theorem}[section]
\newtheorem{lemma}[theorem]{Lemma}
\theoremstyle{definition}
\newtheorem{definition}[theorem]{Definition}
\newtheorem{remark}[theorem]{Remark}
\newcommand\LSC{{\sf LSC}}
\newcommand\USC{{\sf USC}}
\newcommand\cl{{\sf cl}}
\newcounter{mycount}
\newcommand{\myref}[1]{\hyperref[#1]{#1}}
\begin{document}

\title{A new approach to the Kat\v{e}tov-Tong theorem}
\author{G.~Bezhanishvili}
\address{New Mexico State University}
\email{guram@nmsu.edu}

\author{P.~J.~Morandi}
\address{New Mexico State University}
\email{pmorandi@nmsu.edu}

\author{B.~Olberding}
\address{New Mexico State University}
\email{bruce@nmsu.edu}

\date{}

\subjclass[2010]{54D15; 54D30; 54D35; 54C30; 46E25}
\keywords{Normal space; compact Hausdorff space; continuous real-valued function; upper and lower semicontinuous functions; Stone-\v Cech compactification; $\ell$-algebra}

\begin{abstract}
We give a new proof of the Kat\v{e}tov-Tong theorem. Our strategy is to first prove the theorem for compact Hausdorff spaces, and then extend it to all normal spaces.
The key ingredient is how the ring of bounded continuous real-valued functions embeds in the ring of all bounded real-valued functions.
In the compact case this embedding can be described by an appropriate statement, which we prove implies both the Kat\v{e}tov-Tong theorem and
a version of the Stone-Weierstrass theorem.
We then extend the Kat\v{e}tov-Tong theorem to all normal spaces by showing how to extend upper and lower semicontinuous real-valued functions to the Stone-\v Cech compactification so that the less than or equal relation between the functions is preserved.
\end{abstract}

\maketitle

\section{Introduction}

For a topological space $X$ let $B(X)$ be the ring of all bounded real-valued functions and $C^*(X)$ the subring consisting of continuous functions. We recall (see, e.g., \cite[Def.~1, p.~360]{Bou89}) that $f\in B(X)$ is \emph{upper semicontinuous} if $f^{-1}(-\infty, \lambda)$ is open for all $\lambda \in \mathbb{R}$ and $f$ is \emph{lower semicontinuous} if $f^{-1}(\lambda, \infty)$ is open for all $\lambda \in \mathbb{R}$. It is well known (see, e.g., \cite[Prop.~3, p.~363]{Bou89}) that $f$ is upper semicontinous iff $f(x) = \inf_{U \in \mathcal{N}_x} \sup_{y \in U} f(y)$ and $f$ is lower semicontinuous iff $f(x) = \sup_{U \in \mathcal{N}_x} \inf_{y \in U} f(y)$, where $\mathcal{N}_x$ is the set of all open neighborhoods of $x$.
Let
\begin{align*}
\USC(X) &= \{ f \in B(X) \mid f \textrm{ is upper semicontinuous} \} \\
\LSC(X) &= \{ f \in B(X) \mid f \textrm{ is lower semicontinuous} \}.
\end{align*}

We can then formulate the famous Kat\v{e}tov-Tong theorem as follows:

\vspace{2mm}

\noindent {\bf Kat\v{e}tov-Tong Theorem (KT):} Let $X$ be a normal space. If $f \in \USC(X)$ and $g \in \LSC(X)$ with $f \le g$, then there is $h \in C^*(X)$ such that $f \le h \le g$.

\vspace{2mm}

Neither Kat\v{e}tov's proof \cite{Kat51,Kat53} nor Tong's \cite{Ton52} simplifies in the compact setting. We give a different proof
of (KT) by first proving it for compact Hausdorff spaces.
Our proof is based on \cite{BMO18c} where we gave a necessary and sufficient condition for a completely regular space $X$ to be compact in terms of how $C^*(X)$ embeds in $B(X)$. We also use Dilworth's characterization of upper and lower semicontinuous functions \cite[Lem.~4.1]{Dil50}.

To obtain the full version of (KT) for an arbitrary normal space $X$, we use the Stone-\v Cech compactification $\beta X$ of $X$. The key observation in this part of the proof is that if $f \in \USC(X)$, $g \in \LSC(X)$, and $f \le g$, then we can extend $f$ to $F \in \USC(\beta X)$ and $g$ to $G \in \LSC(\beta X)$ so that $F \le G$. This allows us to use the already established (KT) for compact Hausdorff spaces to produce a continuous function between $F$ and $G$, whose restriction to $X$ is then the desired continuous function on $X$.

We conclude the article by showing that a version of the Stone-Weierstrass theorem also follows from our approach.
In order to formulate the Stone-Weierstrass theorem, we point out that $B(X)$ is not only a ring, but an $\mathbb R$-algebra and $C^*(X)$ is an $\mathbb R$-subalgebra of $B(X)$. We recall that the \emph{uniform norm} is defined on $B(X)$ by $\|f\| = \sup f(X)$. We then have a metric space structure on $B(X)$, where the distance between $f,g$ is $\|f-g\|$. Elementary analysis arguments show that $B(X)$ and $C^*(X)$ are complete as metric spaces with respect to the uniform norm. If $X$ is compact Hausdorff, then $C^*(X)$ coincides with the $\mathbb R$-algebra $C(X)$ of all continuous real-valued functions on $X$.

\vspace{2mm}

\noindent {\bf Stone-Weierstrass Theorem (SW):} If $X$ is compact Hausdorff and $A$ is an $\mathbb{R}$-subalgebra of $C(X)$ which separates points of $C(X)$, then $A$ is uniformly dense in $C(X)$.

\vspace{2mm}

In addition to $B(X)$ being an $\mathbb R$-algebra, there is a natural order $\le$ on $B(X)$ defined by $f \le g$ iff $f(x) \le g(x)$ for all $x \in X$. It is elementary to see that the following conditions hold on $B(X)$:
\begin{enumerate}
\item $B(X)$ is a lattice;\footnote{That is, $f\vee g,f\wedge g$ exist in $B(X)$. They are defined by $(f\vee g)(x)=\max\{f(x),g(x)\}$ and
$(f\wedge g)(x)=\min\{f(x),g(x)\}$ for each $x\in X$.}
\item $f\le g$ implies $f+h \le g+h$;
\item $0 \leq f, g$ implies $0 \le fg$;
\item $0 \le f$ and $0\le \lambda\in\mathbb R$ imply $0\le \lambda f$.
\end{enumerate}
Thus, $B(X)$ is a lattice-ordered algebra or
$\ell$-algebra for short,
and $C^*(X)$ is an $\ell$-subalgebra of $B(X)$, where we recall that an $\ell$-subalgebra is an $\mathbb{R}$-subalgebra which is also a sublattice.
We can replace the $\mathbb R$-subalgebra condition in (SW) with an
$\ell$-subalgebra condition and arrive at the following version of the Stone-Weierstrass theorem.

\vspace{2mm}

\noindent {\bf Stone-Weierstrass for $\ell$-subalgebras (SW$_\ell$):} If $X$ is compact Hausdorff and $A$ is an $\ell$-subalgebra of $C(X)$ which separates points of $C(X)$, then $A$ is uniformly dense in $C(X)$.

\vspace{2mm}

We conclude the article by showing how to derive this version of the Stone-Weierstrass theorem from our approach.

\section{The Kat\v etov-Tong Theorem}

Let $X$ be a completely regular space. In \cite{BMO18c} we showed that $X$ is compact iff the inclusion $C^*(X) \subseteq B(X)$ satisfies Condition (C) below.
This will play an important role in proving (KT) for compact Hausdorff spaces.
To formulate (C), we point out that if $S\subseteq B(X)$ is bounded, then
the least upper bound $\bigvee S$ and the greatest lower bound $\bigwedge S$ exist in $B(X)$ and are pointwise.

\begin{definition}
Let $X$ be completely regular, $S,T\subseteq C^*(X)$ bounded, and $0<\varepsilon\in\mathbb R$.
\begin{enumerate}
\item[(C)] If $\bigwedge S + \varepsilon \le \bigvee T$ in $B(X)$, then there are finite $S_0 \subseteq S$ and $T_0 \subseteq T$ with
$\bigwedge S_0 \le \bigvee T_0$.
\end{enumerate}
\end{definition}

\begin{remark}
\begin{enumerate}
\item[]
\item The presence of $\varepsilon$ in (C) is necessary. To see this, identify $\mathbb{R}$ with a subalgebra of $B(X)$, and let $S = \{ \eta \in \mathbb{R} \mid 0 < \eta\}$ and $T = \{\lambda \in \mathbb{R} \mid \lambda < 0\}$. Then $\bigwedge S \le \bigvee T$ but there are no finite $S_0 \subseteq S$ and $T_0 \subseteq T$ satisfying $\bigwedge S_0 \le \bigvee T_0$.
\item In \cite{BMO18c} we considered Condition (C) for more general embeddings $A \to B(X)$. For the purposes of this paper we concentrate on the inclusion $C^*(X) \subseteq B(X)$.
\end{enumerate}
\end{remark}

\begin{theorem} \label{compact} $($\cite[Thm.~2.6(2)]{BMO18c}$)$
Let $X$ be completely regular. Then $X$ is compact iff the inclusion $C^*(X) \subseteq B(X)$ satisfies \emph{(C)}.
\end{theorem}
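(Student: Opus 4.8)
The plan is to prove the two implications separately: the forward one by a compactness argument, the converse (contrapositively) by extracting, from an open cover of $X$ with no finite subcover, two bounded families in $C^*(X)$ that witness the failure of (C). For the implication that $X$ compact yields (C), fix bounded $S,T\subseteq C^*(X)$ and $\varepsilon>0$ with $\bigwedge S+\varepsilon\le\bigvee T$ in $B(X)$, and use that $\bigwedge S$ and $\bigvee T$ are computed pointwise and are finite-valued (as $S,T$ are bounded). Then for each $x\in X$ one has $\inf_{s\in S}s(x)+\varepsilon\le\sup_{t\in T}t(x)$, so I can pick $s_x\in S$ and $t_x\in T$ with $s_x(x)<\inf_{s\in S}s(x)+\varepsilon/2$ and $t_x(x)>\sup_{t\in T}t(x)-\varepsilon/2$; subtracting forces $s_x(x)<t_x(x)$. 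The set $U_x:=\{y\in X\mid s_x(y)<t_x(y)\}=(t_x-s_x)^{-1}(0,\infty)$ is then open and contains $x$, so $\{U_x\mid x\in X\}$ is an open cover of $X$; compactness gives a finite subcover $U_{x_1},\dots,U_{x_n}$, and I would set $S_0=\{s_{x_1},\dots,s_{x_n}\}$ and $T_0=\{t_{x_1},\dots,t_{x_n}\}$. Every $y\in X$ lies in some $U_{x_i}$, so $(\bigwedge S_0)(y)\le s_{x_i}(y)<t_{x_i}(y)\le(\bigvee T_0)(y)$, i.e.\ $\bigwedge S_0\le\bigvee T_0$, establishing (C).

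For the converse, assume $X$ is not compact. Since $X$ is completely regular the cozero-sets form a base (given $x$ in an open set $U$, choose $f\in C^*(X)$ with $0\le f\le1$, $f(x)=1$, and $f\equiv0$ on $X\setminus U$, so that $x\in\{f>0\}\subseteq U$), hence an open cover of $X$ with no finite subcover can be refined to a cover by cozero-sets $W_\alpha=\{f_\alpha>0\}$ ($\alpha\in I$) with no finite subcover, where each $f_\alpha\in C^*(X)$ and $0\le f_\alpha\le1$. Identifying $\mathbb R$ with the constants in $B(X)$ as in the Remark, set $T=\{(nf_\alpha)\wedge1\mid\alpha\in I,\ n\in\mathbb N\}\subseteq C^*(X)$, $S=\{\tfrac12\}$, and $\varepsilon=\tfrac12$; all of these are bounded. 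Because the $W_\alpha$ cover $X$, each $x$ has $f_\alpha(x)>0$ for some $\alpha$, so $\sup_n((nf_\alpha(x))\wedge1)=1$, giving $\bigvee T=1$ and hence $\bigwedge S+\varepsilon=1\le\bigvee T$. On the other hand, for any finite $S_0\subseteq S$ (necessarily $\{\tfrac12\}$) and finite $T_0=\{(n_jf_{\alpha_j})\wedge1\mid j\le k\}\subseteq T$, the finitely many $W_{\alpha_1},\dots,W_{\alpha_k}$ fail to cover $X$, so some $x_0$ satisfies $f_{\alpha_j}(x_0)=0$ for all $j$, whence $(\bigvee T_0)(x_0)=0<\tfrac12=(\bigwedge S_0)(x_0)$; thus $\bigwedge S_0\not\le\bigvee T_0$ and (C) fails.

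The forward direction carries the real content, but it goes through smoothly once one leans on the pointwise lattice operations together with the slack $\varepsilon$ built into (C), so I expect no serious obstacle there. The only genuine design choice is in the converse: I need bounded elements of $C^*(X)$ whose lattice joins and meets detect the failure of compactness, and the device that makes this work is passing to a cozero refinement and using the ``saturating'' functions $(nf_\alpha)\wedge1$, whose pointwise suprema recover the characteristic-function behavior of $\bigcup_\alpha W_\alpha$ while staying inside $C^*(X)$.
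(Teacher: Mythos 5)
Your proof is correct. Note, however, that the paper does not prove Theorem~\ref{compact} at all --- it is imported verbatim from \cite[Thm.~2.6(2)]{BMO18c} --- so there is no in-paper argument to compare against; what you have produced is a self-contained substitute for that citation. Both halves check out. In the forward direction, the $\varepsilon/2$-approximate selections $s_x,t_x$ do force $s_x(x)<t_x(x)$ (from $s_x(x)<\inf_S s(x)+\varepsilon/2\le\sup_T t(x)-\varepsilon/2<t_x(x)$), the sets $U_x=(t_x-s_x)^{-1}(0,\infty)$ are open, and extracting a finite subcover gives $\bigwedge S_0\le\bigvee T_0$ pointwise; this is exactly where the slack $\varepsilon$ in (C) earns its keep, consistent with Remark~2(1) of the paper showing $\varepsilon$ cannot be dropped. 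In the converse, the passage to a cozero refinement (which inherits the absence of a finite subcover, since each cozero set sits inside a member of the original cover) and the saturating family $(nf_\alpha)\wedge 1$ correctly yield $\bigvee T=1$ while every finite $T_0$ vanishes at some point, so $S=\{\tfrac12\}$, $\varepsilon=\tfrac12$ witnesses the failure of (C). The only pedantic point worth flagging is that you implicitly take $S$ and $T$ (and the finite subsets $S_0$, $T_0$) to be nonempty, which is forced anyway if $\bigwedge S$ and $\bigvee T$ are to exist in $B(X)$. The paper itself only uses the ``compact implies (C)'' direction (as its Remark~4 notes), and your argument for that half is elementary and independent of the vector-lattice machinery of \cite{BMO18c}, so it would serve to make the present paper's proof of (KT) self-contained.
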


\begin{remark}
Our proof of (KT) for compact Hausdorff spaces (see Lemma~\ref{thm: C implies KT}) only needs one implication of Theorem~\ref{compact}, that the inclusion $C^*(X) \subseteq B(X)$ satisfies (C) if $X$ is compact.
\end{remark}

The following result uses Dilworth's lemma \cite[Lem.~4.1]{Dil50} characterizing upper and lower semicontinuous functions: Let $X$ be a completely regular space and $f \in B(X)$. Then $f \in \USC(X)$ iff $f$ is a pointwise meet of continous functions, and $f \in \LSC(X)$ iff $f$ is a pointwise join of continous functions.

\begin{lemma} \label{cor: C}
Let $X$ be compact Hausdorff, $f \in \USC(X)$, $g \in \LSC(X)$, and $\varepsilon > 0$. If $f + \varepsilon \le g$, then there is $a \in C(X)$
with $f \le a \le g$.
\end{lemma}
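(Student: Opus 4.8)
The plan is to combine Dilworth's lemma with Condition~(C). Since $X$ is compact Hausdorff, $C^*(X)=C(X)$ and the inclusion $C(X)\subseteq B(X)$ satisfies (C) by Theorem~\ref{compact} (only this direction of the theorem is used, in keeping with the remark above). First I would invoke Dilworth's lemma to write $f=\bigwedge S$ and $g=\bigvee T$ for families $S,T\subseteq C(X)$. These need not be bounded a priori, so, choosing $M>0$ with $-M\le f$ and $g\le M$ (possible as $f,g\in B(X)$), I would replace each $s\in S$ by $s\wedge M$ and each $t\in T$ by $t\vee(-M)$. Each $s\in S$ satisfies $s\ge f\ge -M$ and each $t\in T$ satisfies $t\le g\le M$, so these truncated families lie in $[-M,M]$, consist of continuous functions, and still have pointwise meet $f$ and pointwise join $g$, respectively. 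Thus we may assume $S,T\subseteq C^*(X)$ are bounded.

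Now the hypothesis $f+\varepsilon\le g$ reads $\bigwedge S+\varepsilon\le\bigvee T$ in $B(X)$, so Condition~(C) produces finite $S_0\subseteq S$ and $T_0\subseteq T$ with $\bigwedge S_0\le\bigvee T_0$. I would then take $a=\bigwedge S_0$. A finite meet of continuous functions is continuous, so $a\in C(X)$, and
\[
f=\bigwedge S\le\bigwedge S_0=a\le\bigvee T_0\le\bigvee T=g,
\]
which gives $f\le a\le g$, completing the argument.

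There is no real obstacle here once the ingredients are in place: Dilworth's description of semicontinuous functions as pointwise meets and joins of continuous functions, together with the implication ``$X$ compact $\Rightarrow$ (C)'', does essentially all the work. The one technical point deserving attention is the passage to bounded families $S$ and $T$, since Dilworth's lemma alone may yield unbounded families while (C) is phrased for bounded subsets of $C^*(X)$; the truncation above resolves this. It is perhaps also worth emphasizing conceptually that the finite subsets supplied by (C) are exactly what turns ``approximate comparability with a gap $\varepsilon$'' into a genuine continuous interpolant, namely the finite meet $\bigwedge S_0$.
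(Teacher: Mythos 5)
Your proof is correct and follows essentially the same route as the paper: Dilworth's lemma to write $f=\bigwedge S$ and $g=\bigvee T$ with $S,T\subseteq C(X)$, then Condition (C) via Theorem~\ref{compact} to extract the finite meet $a=\bigwedge S_0$ as the interpolant. Your extra truncation step making $S$ and $T$ bounded (so that (C) applies as literally stated) is a reasonable piece of care that the paper's proof leaves implicit.
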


\begin{proof}
Since $X$ is compact Hausdorff, it follows from Urysohn's lemma that $X$ is completely regular. Therefore, by Dilworth's lemma, $f = \bigwedge S$ and $g = \bigvee T$ for some $S,T \subseteq C(X)$. Thus, by Theorem~\ref{compact}, there exist finite $S_0 \subseteq S$ and $T_0 \subseteq T$ with $f \le \bigwedge S_0 \le \bigvee T_0 \le g$. Set $a = \bigwedge S_0$. Then $a \in C(X)$ and $f \le a \le g$.
\end{proof}

We are ready to prove (KT) in the compact Hausdorff setting. For this we utilize a technique that goes back to Dieudonn\' e \cite{Die44}, and was used by Edwards \cite[p.~21]{Edw66} and Blatter and Seever \cite[pp.~32-33]{BS75}.

\begin{lemma} \label{thm: C implies KT}
Let $X$ be compact Hausdorff. If $f \in \USC(X)$ and $g \in \LSC(X)$ with $f \le g$, then there is $a \in C(X)$ such that $f \le a \le g$.
\end{lemma}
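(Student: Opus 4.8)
The plan is to bootstrap from the strict-gap case already established in Lemma~\ref{cor: C}. Given $f \in \USC(X)$ and $g \in \LSC(X)$ with only $f \le g$, I would build a uniformly Cauchy sequence $a_0,a_1,a_2,\dots$ in $C(X)$ satisfying the running invariants
\[
f - 2^{-n} \le a_n \le g ,
\]
so that its uniform limit $a \in C(X)$ (using that $C(X)$ is complete in the uniform norm, as recalled in the introduction, since $X$ is compact Hausdorff) satisfies $f \le a \le g$. For the base case, apply Lemma~\ref{cor: C} to the upper semicontinuous function $f-1$ and the lower semicontinuous function $g$; their gap is $1$, so we obtain $a_0 \in C(X)$ with $f - 1 \le a_0 \le g$. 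All the real work is in the inductive step, which has to shrink the slack below $f$ by a factor of $2$ while keeping the correction term summably small.

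For the inductive step, suppose $a_n \in C(X)$ with $f - 2^{-n} \le a_n \le g$. Consider the truncated error functions $u_n := (f - a_n) \vee (-2^{-n})$ and $v_n := (g - a_n) \wedge 2^{-n}$. Since $\USC(X)$ is closed under addition and finite joins, $\LSC(X)$ is closed under addition and finite meets, and constants lie in both, we get $u_n \in \USC(X)$ and $v_n \in \LSC(X)$; the invariants give $f - a_n \le 2^{-n}$ and $g - a_n \ge 0$, from which one checks $u_n \le v_n$ as well as $-2^{-n} \le u_n$ and $v_n \le 2^{-n}$. Now comes the key device: although $u_n \le v_n$ is not strict, the shifted function $u_n - 2^{-(n+1)}$ is upper semicontinuous and satisfies $(u_n - 2^{-(n+1)}) + 2^{-(n+1)} = u_n \le v_n$, so Lemma~\ref{cor: C} applies and yields $b_n \in C(X)$ with $u_n - 2^{-(n+1)} \le b_n \le v_n$. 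Put $a_{n+1} := a_n + b_n$. Then $b_n \le v_n \le g - a_n$ gives $a_{n+1} \le g$; $b_n \ge u_n - 2^{-(n+1)} \ge (f - a_n) - 2^{-(n+1)}$ gives $f - 2^{-(n+1)} \le a_{n+1}$; and $-2^{-n} - 2^{-(n+1)} \le b_n \le 2^{-n}$ gives $\|b_n\| \le \tfrac{3}{2}\,2^{-n}$, so the invariants persist and the corrections are summable.

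Since $\sum_n \|b_n\| < \infty$, the sequence $(a_n)$ is uniformly Cauchy and hence converges to some $a \in C(X)$. Passing to the limit in the invariants, $a_n \le g$ for all $n$ forces $a \le g$, while $f \le a_n + 2^{-n}$ together with $a_n \to a$ and $2^{-n} \to 0$ forces $f \le a$; thus $f \le a \le g$, as desired. The main obstacle, and the whole point of the argument, is that Lemma~\ref{cor: C} requires a genuine $\varepsilon$-separation: the iteration converts the weak hypothesis $f \le g$ into a sequence of strictly separated subproblems whose solutions can be controlled tightly enough to be summed, which is precisely the Dieudonn\'e-type technique referred to just before the statement.
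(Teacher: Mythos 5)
Your proof is correct and is essentially the paper's own argument: both iterate Lemma~\ref{cor: C} with an artificial gap of $2^{-(n+1)}$ applied to a join/meet-truncated upper--lower semicontinuous pair, and then pass to the uniform limit using completeness of $C(X)$. Your reformulation via the error functions $u_n, v_n$ and additive corrections $b_n$ is just a change of variables from the paper's direct construction of $a_{n+1}$ between $(f-2^{-(n+1)})\vee(a_n-2^{-n})$ and $g\wedge(a_n+2^{-n})$; the only (harmless) difference is that your lower control on $a_{n+1}-a_n$ is $-\tfrac{3}{2}\,2^{-n}$ rather than $-2^{-n}$, which is still summable.
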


\begin{proof}
Let $f \in \USC(X)$,  $g \in \LSC(X)$, and $f \le g$. By induction we construct a sequence $\{a_n \mid n \ge 0\}$ in $C(X)$ such that for each $n \ge 1$,
\begin{align}
f - 1/2^n &\le a_n \le g \label{eqn4}\\
a_{n-1} - 1/2^{n-1} &\le a_{n} \le a_{n-1} + 1/2^{n-1}. \label{eqn5}
\end{align}
For the base case, since $(f-1/2) + 1/2 \le g$, by Lemma~\ref{cor: C} there is $a_1 \in C(X)$ with $f - 1/2 \le a_1 \le g$. Set $a_0 = a_1$. Then (\ref{eqn4}) and (\ref{eqn5}) are satisfied for $n = 1$. Suppose that $m \ge 1$ and we have
$a_0, \dots, a_m \in C(X)$ satisfying (\ref{eqn4}) and (\ref{eqn5}) for all $1 \le n \le m$. By (\ref{eqn4}) for $n = m$ we get $f \le a_m  + 1/2^m$. In addition, it is clear that $a_{m} - 1/2^{m+1} \le a_m  + 1/2^m$. Thus,
\[
f \vee (a_{m} - 1/2^{m+1}) \le a_m  + 1/2^m.
\]
Since $f, a_m\le g$, it is also clear that $f \vee (a_{m} - 1/2^{m+1}) \le g$. So
\[
f \vee (a_{m} - 1/2^{m+1}) \le g \wedge (a_m  + 1/2^m).
\]
Since $(a \vee b) + c = (a +c) \vee (b+c)$ holds in $B(X)$,
\begin{align*}
\left[(f - 1/2^{m+1}) \vee (a_{m} - 1/2^m)\right] + 1/2^{m+1} &=&  \\
(f - 1/2^{m+1} + 1/2^{m+1}) \vee (a_{m} - 1/2^m + 1/2^{m+1}) &=& \\
f \vee (a_{m} -1/2^{m+1}).
\end{align*}
Consequently,
\[
\left[(f - 1/2^{m+1}) \vee (a_{m} - 1/2^m)\right] + 1/2^{m+1} \le g \wedge (a_m + 1/2^m).
\]
By Lemma~\ref{cor: C}, there is $a_{m+1} \in C(X)$ satisfying
\[
(f - 1/2^{m+1}) \vee (a_{m} - 1/2^m) \le a_{m+1} \le g \wedge (a_m + 1/2^m).
\]
Therefore,
\begin{align*}
f - 1/2^{m+1} &\le a_{m+1} \le g \\
a_{m} - 1/2^m &\le a_{m+1} \le a_m + 1/2^m.
\end{align*}
Thus, (\ref{eqn4}) and (\ref{eqn5}) hold for $n = m+1$. By induction we have produced the desired sequence. Equation (\ref{eqn5}) implies that $\{a_n\}$ is a
Cauchy sequence, so has a uniform limit $a \in C(X)$. For each $x \in X$, (\ref{eqn4}) yields $f(x) - 1/2^n \le a_n(x) \le g(x)$ for each $n$. Taking limits as $n \to \infty$ gives $f(x) \le a(x) \le g(x)$. Therefore, $f \le a \le g$.
\end{proof}

To extend (KT) to an arbitrary normal space we require the following lemma.

\begin{lemma} \label{lem: lifting} $($\cite[Lem~7.2]{BMO18d}$)$
Let $X$ be a dense subspace of a compact Hausdorff space $Y$.
\begin{enumerate}
\item\label{lem: lifting(2)} If $f \in \USC(X)$, define $U(f)$ on $Y$ by
\[
U(f)(y) = \inf_{U \in \mathcal{N}_y} \sup_{x \in U\cap X} f(x).
\]
Then $U(f) \in \USC(Y)$ and extends $f$.

\item\label{lem: lifting(1)} If $f \in \LSC(X)$, define $L(f)$ on $Y$ by
\[
L(f)(y) = \sup_{U \in \mathcal{N}_y}\inf_{x \in U\cap X} f(x).
\]
Then $L(f) \in \LSC(Y)$ and extends $f$.
\end{enumerate}
\end{lemma}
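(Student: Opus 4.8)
The plan is to establish everything for $U(f)$ and then obtain the statement for $L(f)$ by a short duality argument: since $-f\in\USC(X)$ whenever $f\in\LSC(X)$ and, directly from the defining formulas, $L(f)=-U(-f)$, while $\LSC(Y)$ consists exactly of the negatives of the members of $\USC(Y)$, part (\ref{lem: lifting(1)}) follows from part (\ref{lem: lifting(2)}) applied to $-f$. So I concentrate on $U(f)$ for $f\in\USC(X)$. First I would check that $U(f)$ is a well-defined bounded function on $Y$. Because $X$ is dense in $Y$, every nonempty open $U\subseteq Y$ meets $X$, so each inner supremum $\sup_{x\in U\cap X}f(x)$ is taken over a nonempty set; and if $|f|\le M$ on $X$, then this supremum lies in $[-M,M]$, hence so does $U(f)(y)$. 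Thus $U(f)\in B(Y)$ with $\|U(f)\|\le\|f\|$.

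Next I would verify that $U(f)$ extends $f$, i.e. $U(f)(x_0)=f(x_0)$ for every $x_0\in X$. The key observation here is that, because $X$ carries the subspace topology, the open neighborhoods of $x_0$ in $X$ are precisely the sets $U\cap X$ with $U$ an open neighborhood of $x_0$ in $Y$. Feeding this into the characterization of upper semicontinuity recalled in the introduction — which for $f$ on $X$ reads $f(x_0)=\inf\{\sup_{x\in V}f(x):V$ an open neighborhood of $x_0$ in $X\}$ — shows that this infimum coincides with $\inf\{\sup_{x\in U\cap X}f(x):U$ an open neighborhood of $x_0$ in $Y\}=U(f)(x_0)$, so $U(f)(x_0)=f(x_0)$.

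Then I would show $U(f)\in\USC(Y)$ by proving that $U(f)^{-1}(-\infty,\lambda)$ is open for each $\lambda\in\mathbb R$. Suppose $U(f)(y)<\lambda$. By definition of the infimum there is an open neighborhood $U$ of $y$ with $\sup_{x\in U\cap X}f(x)<\lambda$. For any $z\in U$ the set $U$ is itself an open neighborhood of $z$, so $U(f)(z)\le\sup_{x\in U\cap X}f(x)<\lambda$; hence $U\subseteq U(f)^{-1}(-\infty,\lambda)$, and this set, being a union of such open $U$, is open. This argument uses only that the inner supremum decreases as the neighborhood shrinks, and in fact it needs neither compactness nor the Hausdorff property of $Y$, only the density of $X$, which is what keeps the supremum finite.

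With all three items in hand, part (\ref{lem: lifting(2)}) is complete, and part (\ref{lem: lifting(1)}) follows by applying it to $-f$ as indicated above. I do not expect a serious obstacle: each step is a short manipulation of suprema and infima over neighborhood filters. The one point that requires care is the extension claim, where one must be precise about how the open neighborhoods of a point of $X$ in $X$ relate to its open neighborhoods in $Y$, so as to legitimately invoke the semicontinuity characterization for $f$.
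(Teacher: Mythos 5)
Your proof is correct: the well-definedness of the inner supremum via density of $X$, the extension property via the $\inf$-$\sup$ characterization of upper semicontinuity combined with the identification of neighborhoods of $x_0$ in $X$ with traces $U\cap X$ of neighborhoods in $Y$, the openness of $U(f)^{-1}(-\infty,\lambda)$ by pushing a single witnessing neighborhood to all of its points, and the duality $L(f)=-U(-f)$ are all sound. The paper itself gives no proof of this lemma, only the citation to \cite[Lem~7.2]{BMO18d}, so there is no in-text argument to compare against; yours is the standard one, and your observation that only the density of $X$ in $Y$ (not compactness or the Hausdorff property) is used is accurate.
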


\begin{remark}
There can exist upper semicontinuous extensions of $f \in \USC(X)$ to $Y$ other than $U(f)$. For example, let $X$ be an infinite discrete space and $Y$ an arbitrary compactification of $X$. If $f = 0$ on $X$, then $U(f) = 0$ on $Y$, while the characteristic function of $Y \setminus X$ is another upper semicontinuous extension of $f$ because $Y \setminus X$ is closed. Similarly, there can exist lower semicontinuous extensions of $f \in \LSC(X)$ to $Y$ other than $L(f)$.
\end{remark}

We are ready to prove (KT) for an arbitrary normal space $X$. Let $\beta X$ be the Stone-\v Cech compactification of $X$. Without loss of generality we may assume that $X$ is a subspace of $\beta X$. Since $X$ is normal, if $C, D$ are disjoint closed subsets of $X$, it is a simple consequence of Urysohn's lemma that $\cl_{\beta X}(C) \cap \cl_{\beta X}(D) = \varnothing$ (see, e.g., \cite[Cor.~3.6.4]{Eng89}).

\begin{theorem} \emph{(Kat\v{e}tov-Tong)} \label{thm: KT}
Let $X$ be a normal space. If $f \in \USC(X)$ and $g \in \LSC(X)$ with $f \le g$, then there is $h \in C^*(X)$ such that $f \le h \le g$.
\end{theorem}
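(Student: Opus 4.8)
The plan is to transport the problem to the compact Hausdorff space $\beta X$, invoke the case already proved in Lemma~\ref{thm: C implies KT}, and then restrict. Given $f\in\USC(X)$ and $g\in\LSC(X)$ with $f\le g$, set $F=U(f)$ and $G=L(g)$. By Lemma~\ref{lem: lifting}, $F\in\USC(\beta X)$ extends $f$ and $G\in\LSC(\beta X)$ extends $g$. If we know $F\le G$ on all of $\beta X$, then Lemma~\ref{thm: C implies KT} yields $H\in C(\beta X)$ with $F\le H\le G$; since $\beta X$ is compact, $H$ is bounded, so $h:=H|_X\in C^*(X)$, and for $x\in X$ we get $f(x)=F(x)\le H(x)=h(x)\le G(x)=g(x)$, i.e. $f\le h\le g$, as required.

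Thus the whole point — and the only place normality is used — is the inequality $F\le G$ on $\beta X$, which I expect to be the main obstacle, since $U$ and $L$ are defined by entirely local (inf/sup over neighborhoods) recipes that a priori need not respect the order. I would argue by contradiction: suppose $F(y)>G(y)$ for some $y\in\beta X$ and pick reals $q<r$ with $G(y)<q<r<F(y)$. Consider
\[
C=\{x\in X : f(x)\ge r\}, \qquad D=\{x\in X : g(x)\le q\}.
\]
Because $f$ is upper semicontinuous and $g$ is lower semicontinuous, $C$ and $D$ are closed in $X$; and because $f\le g$ and $q<r$, they are disjoint (if $x\in C\cap D$ then $f(x)\ge r>q\ge g(x)$, contradicting $f(x)\le g(x)$). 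By the consequence of normality recalled just before the theorem, $\cl_{\beta X}(C)\cap\cl_{\beta X}(D)=\varnothing$.

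I would finish by showing the contradictory statement $y\in\cl_{\beta X}(C)\cap\cl_{\beta X}(D)$. For any open neighborhood $V$ of $y$ in $\beta X$, density of $X$ makes $V\cap X$ nonempty, and from the definition of $U(f)$ we have $\sup_{x\in V\cap X}f(x)\ge U(f)(y)=F(y)>r$, so some $x\in V\cap X$ satisfies $f(x)>r$, hence $x\in C\cap V$; thus $y\in\cl_{\beta X}(C)$. Symmetrically, $\inf_{x\in V\cap X}g(x)\le L(g)(y)=G(y)<q$ gives some $x\in V\cap X$ with $g(x)<q$, hence $x\in D\cap V$, so $y\in\cl_{\beta X}(D)$. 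This contradiction establishes $F\le G$, and the argument sketched in the first paragraph then completes the proof. The only routine points left to check are that $U(f)$ and $L(g)$ genuinely restrict to $f$ and $g$ (Lemma~\ref{lem: lifting}) and the elementary inf/sup comparisons used above.
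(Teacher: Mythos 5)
Your proposal is correct and follows essentially the same route as the paper's proof: extend via $U$ and $L$, prove $F\le G$ by contradiction using the disjoint closed sets $f^{-1}[r,\infty)$ and $g^{-1}(-\infty,q]$ together with the fact that disjoint closed sets in a normal space have disjoint closures in $\beta X$, then apply the compact case and restrict. The only differences are notational.
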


\begin{proof}
Set $F = U(f)$ and $G = L(g)$. By Lemma~\ref{lem: lifting}, $F \in \USC(\beta X)$ and extends $f$, and $G \in \LSC(\beta X)$ and extends $g$.
We show $F \le G$. If not, there are $y \in \beta X$ and $\lambda, \eta \in\mathbb R$ with
$F(y) > \eta > \lambda > G(y)$. Let $U$ be an open neighborhood of $y$. Since $F(y) = \inf_{U \in \mathcal N_y} \sup_{x \in U\cap X} f(x)$, we have $\sup_{x \in U\cap X} f(x) > \eta$ for each open $U \in \mathcal N_y$. Therefore, $U \cap f^{-1}[\eta, \infty) \ne \varnothing$. Thus, $y \in \cl_{\beta X}f^{-1}[\eta, \infty)$. Similarly, $y \in \cl_{\beta X}g^{-1}(-\infty, \lambda]$. Because $f$ is upper semicontinuous, $f^{-1}[\eta, \infty)$ is closed and since $g$ is lower semicontinuous, $g^{-1}(-\infty, \lambda]$ is closed. As $X$ is normal,
$f^{-1}[\eta, \infty) \cap g^{-1}(-\infty, \lambda] \ne \varnothing$ since their closures in $\beta X$ are not disjoint. This is a contradiction to $f \le g$. Therefore, $F \le G$. By Lemma~\ref{thm: C implies KT}, there is $a \in C(\beta X)$ with $F \le a \le G$. If $h$ is the restriction of $a$, then $h \in C^*(X)$ and $f \le h \le g$.
\end{proof}

\begin{remark}
The key step in the proof of Theorem~\ref{thm: KT} is to show that if $f \in \USC(X)$ and $g \in \LSC(X)$ with $f \le g$, then $U(f) \le L(g)$.
\begin{enumerate}
\item If $X$ is not normal, it need not be true that $f \le g$ implies $U(f) \le L(g)$. To see this, let $X$ be a completely regular but not normal space.
Then there are disjoint closed sets $C, D$ of $X$ with $\cl_{\beta X}(C)$,  $\cl_{\beta X}(D)$ having nonempty intersection. Let $y \in \cl_{\beta X}(C) \cap \cl_{\beta X}(D)$, $f$ be the characteristic function of $C$, and $g$ the characteristic function of $X \setminus D$. It is easy to see that $f \in \USC(X)$ and $g \in \LSC(X)$.  Since $C$ and $D$ are disjoint, $f \le g$. Let $U$ be an open neighborhood of $y$. Then $U \cap C$ is nonempty, so $\sup_{x \in U \cap X} f(x) = 1$. Therefore, $U(f)(y) = 1$. Similarly, $U \cap D$ is nonempty, so $\inf_{x \in U \cap X} g(x) = 0$, and hence $L(g)(y) = 0$. Thus, $U(f) \not\le L(g)$.
\item We cannot replace $\beta X$ with an arbitrary compactification of $X$. For example, let $X$ be an infinite discrete space and $Y$ the one-point compactification of $X$. Let $A$ be an infinite subset of $X$ whose complement is also infinite,  $f$ the characteristic function of $A$, and $g = f$. Trivially $f \in \USC(X)$,  $g \in \LSC(X)$, and the same argument as above shows that $U(f)(\infty) = 1$ and $L(g)(\infty) = 0$. Thus, $U(f) \not\le L(g)$.
\end{enumerate}
\end{remark}

\section{The Stone-Weierstrass Theorem for $\ell$-subalgebras}

In this final section we show how to derive (SW$_\ell$) from (C).

\begin{definition} \label{def:closed and open}
Let $X$ be completely regular  and let $A$ be an $\ell$-subalgebra of $B(X)$.
\begin{enumerate}
\item Call $f \in B(X)$ \emph{closed} relative to $A$ if there is $S \subseteq A$ with $f = \bigwedge S$.
\item Call $g \in B(X)$ \emph{open} relative to $A$ if there is $T \subseteq A$ with $g = \bigvee T$.
\item Call $h \in B(X)$ \emph{clopen} relative to $A$ if $h$ is both closed and open relative to $A$.
\end{enumerate}
\end{definition}

\begin{remark}\label{translation}
\begin{enumerate}
\item[]
\item It is easy to see that if $S$ is a subset of $X$, then the characteristic function
$\chi_S$ is closed relative to $C^*(X)$ iff $S$ is a closed subset of $X$, and $\chi_S$ is open relative to
$C^*(X)$ iff $S$ is open in $X$. This motivates the terminology of Definition~\ref{def:closed and open}.
\item It is also easy to see that if $f \in B(X)$ is open (resp.~closed) relative to $A$,
then so is $f+\lambda$.
\end{enumerate}
\end{remark}

\begin{lemma} \label{clopen}
Let $X$ be compact Hausdorff. The clopen elements of $B(X)$ relative to $A$ are in the uniform closure of $A$ in $B(X)$.
\end{lemma}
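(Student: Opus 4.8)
The plan is to use Condition (C) to trade the (possibly infinite) families witnessing that $h$ is clopen for finite subfamilies, and then to check that the finite meet (C) produces already lies within any prescribed $\varepsilon>0$ of $h$; since $X$ is compact Hausdorff, Condition (C) is at hand by Theorem~\ref{compact}. So, recalling Definition~\ref{def:closed and open}, write $h=\bigwedge S=\bigvee T$ with bounded $S,T\subseteq A$, and fix $\varepsilon>0$. I will produce $a\in A$ with $\|h-a\|\le\varepsilon$; since $\varepsilon$ is arbitrary, this places $h$ in the uniform closure of $A$.

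The one preparatory move is to open up an $\varepsilon$-gap so that (C) applies. By Remark~\ref{translation}, $h-\varepsilon$ is again closed relative to $A$, say $h-\varepsilon=\bigwedge S'$ with $S'\subseteq A$ bounded (one may take $S'=\{s-\varepsilon\mid s\in S\}$). Then $\bigwedge S'+\varepsilon=h=\bigvee T$, so in particular $\bigwedge S'+\varepsilon\le\bigvee T$ in $B(X)$. Applying (C) to $S'$ and $T$ yields finite $S'_0\subseteq S'$ and $T_0\subseteq T$ with $\bigwedge S'_0\le\bigvee T_0$. Put $a=\bigwedge S'_0$. Since $A$ is a sublattice of $B(X)$ and $S'_0$ is finite, $a\in A$; since $S'_0\subseteq S'$ we have $a\ge\bigwedge S'=h-\varepsilon$; and since $T_0\subseteq T$ we have $a\le\bigvee T_0\le\bigvee T=h$. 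Hence $h-\varepsilon\le a\le h$, so $\|h-a\|\le\varepsilon$.

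The step I expect to carry the weight --- and the only point where compactness of $X$ really enters --- is the appeal to Condition (C): it is precisely what upgrades the $\varepsilon$-slack inequality $\bigwedge S'+\varepsilon\le\bigvee T$ between arbitrary families to an honest inequality $\bigwedge S'_0\le\bigvee T_0$ between \emph{finite} ones; the sublattice property of $A$ (a finite meet of members of $A$ again lies in $A$) is then what keeps $a$ inside $A$. One should note that Theorem~\ref{compact} gives (C) for bounded subsets of $C^*(X)$, so the families handed to (C) must consist of continuous functions; this is automatic in the setting relevant here, where $A$ is an $\ell$-subalgebra of $C(X)=C^*(X)$, so that $S',T\subseteq A\subseteq C^*(X)$. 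The remaining ingredients --- passing to bounded witnessing families, translating by a constant, and the pointwise comparisons of finite meets and joins --- are routine.
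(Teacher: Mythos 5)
Your proof is correct and follows essentially the same route as the paper: invoke Theorem~\ref{compact} to get Condition (C), use the translation remark to create an $\varepsilon$-gap between a closed and an open representation of (a translate of) $h$, apply (C) to extract a finite meet $a \in A$, and conclude $\|h-a\|\le\varepsilon$. The only difference is cosmetic --- you shift the closed family down by $\varepsilon$, whereas the paper shifts to $h+\varepsilon/2$ and $h+\varepsilon$; the argument is the same.
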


\begin{proof}
Let $h$ be clopen in $B(X)$ relative to $A$ and let $\varepsilon  >0$. Then $(h + \varepsilon/2) + \varepsilon/2 \le h + \varepsilon$.
By Remark~\ref{translation}(2), $h + \varepsilon/2$ is closed and $h + \varepsilon$ is open relative to $A$. Therefore, $h + \varepsilon/2 = \bigwedge S$ and $h + \varepsilon = \bigvee T$ for some $S, T \subseteq A$. Since $X$ is compact Hausdorff, by Theorem~\ref{compact}, the inclusion $C(X) \subseteq B(X)$ satisfies Condition (C). Because $A \subseteq C(X)$, it follows from (C) that there are finite $S_0 \subseteq S$ and $T_0 \subseteq T$ with $h + \varepsilon/2 \le \bigwedge S_0 \le \bigvee T_0 \le h + \varepsilon$. Let $a = \bigwedge S_0$. Then $h + \varepsilon/2 \le a \le h + \varepsilon$, and $a \in A$ since $A$ is an $\ell$-subalgebra of $C(X)$. Thus, $\| a - h\| \le \varepsilon$. Since $\varepsilon$ is arbitrary, this shows
that $h$ is in the uniform closure of $A$.
\end{proof}

\begin{remark}
While we do not need it, the converse of Lemma~\ref{clopen} that each element in the uniform closure of $A$ is clopen relative to $A$ is also true (see, e.g., \cite[Lem~3.16(2)]{BMO19b}).
\end{remark}


The last ingredient needed for (SW$_\ell$) is the following lemma, the first two items of which are in \cite[Lem~2.8]{BMO18c}) and the third is an easy consequence of the first two.

\begin{lemma} \label{lem:closed and open}
Let $X$ be compact Hausdorff and let $A$ be an $\ell$-subalgebra of $C(X)$ which separates points of $X$.
\begin{enumerate}
\item If $f \in \USC(X)$, then $f$ is closed relative to $A$.
\item If $g \in \LSC(X)$, then $g$ is open relative to $A$.
\item If $h \in C(X)$, then $h$ is clopen relative to $A$.
\end{enumerate}
\end{lemma}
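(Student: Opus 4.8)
The plan is to reduce everything to the case of a \emph{continuous} function and then bootstrap with Dilworth's lemma. First note that a pointwise meet of functions each closed relative to $A$ is again closed relative to $A$: if $f=\bigwedge_i c_i$ with each $c_i=\bigwedge S_i$ for some $S_i\subseteq A$, then $f=\bigwedge\big(\bigcup_i S_i\big)$ and $\bigcup_i S_i\subseteq A$; dually for joins. So by Dilworth's lemma it is enough to show that every $c\in C(X)$ is closed relative to $A$ and open relative to $A$. Moreover, since $A$ is closed under negation, $c$ is open relative to $A$ exactly when $-c$ is closed relative to $A$; hence everything comes down to the single assertion: \emph{every $c\in C(X)$ is closed relative to $A$}. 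Granting this, (1) follows from the upper-semicontinuous half of Dilworth's lemma, (2) from applying the assertion to $-c$ together with the lower-semicontinuous half, and (3) is immediate from $C(X)\subseteq\USC(X)\cap\LSC(X)$.

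For the core step, fix $c\in C(X)$; I claim that $c=\bigwedge\{a\in A\mid c\le a\}$, equivalently that for each $x_0\in X$ and $\varepsilon>0$ there is $a\in A$ with $c\le a$ and $a(x_0)\le c(x_0)+\varepsilon$. This is the lattice form of a Stone--Weierstrass interpolation, which I would carry out by hand so as not to invoke (SW$_\ell$) itself. Fix $x_0$ and $\varepsilon$. For each $y\in X$, using that $A$ separates points and contains the constants, choose $a_y\in A$ with $a_y(x_0)=c(x_0)$ and $a_y(y)=c(y)$ (for $y\ne x_0$ pick $b\in A$ with $b(x_0)\ne b(y)$ and set $a_y=c(x_0)+\tfrac{c(y)-c(x_0)}{b(y)-b(x_0)}\big(b-b(x_0)\big)$; for $y=x_0$ take $a_y=c(x_0)$). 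Since $a_y-c$ is continuous and vanishes at $y$, the set $U_y=\{x\in X\mid a_y(x)>c(x)-\varepsilon\}$ is an open neighborhood of $y$, and by compactness finitely many $U_{y_1},\dots,U_{y_n}$ cover $X$. Put $a=\big(\bigvee_{j=1}^{n}a_{y_j}\big)+\varepsilon$; this lies in $A$ because $A$ is a sublattice of $C(X)$ containing the constants. If $x\in X$ then $x\in U_{y_j}$ for some $j$, so $a(x)\ge a_{y_j}(x)+\varepsilon>c(x)$, whence $c\le a$; and $a_{y_j}(x_0)=c(x_0)$ for every $j$, so $a(x_0)=c(x_0)+\varepsilon$. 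This proves the claim, so $c$ is closed relative to $A$, and by the first paragraph the lemma follows.

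I expect the core step to be the only real obstacle: it is precisely the lattice--interpolation heart of Stone--Weierstrass, and the point is to execute it directly rather than quoting a density theorem, since the density of $A$ is exactly what this section is building toward (so using it would be circular). The place to be careful is the use of the hypotheses on $A$: separation of points enters in the two-point interpolation, while $A$ containing the constant functions (part of being a unital $\mathbb R$-subalgebra) and being a sublattice are both needed to guarantee that the final function $a=\big(\bigvee_j a_{y_j}\big)+\varepsilon$ belongs to $A$.
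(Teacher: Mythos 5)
Your proof is correct, and it is self-contained where the paper is not: the paper offers no argument for this lemma, instead citing \cite[Lem.~2.8]{BMO18c} for parts (1) and (2) and noting that (3) follows. What you supply in its place is the classical lattice-theoretic (Kakutani--Krein) half of Stone--Weierstrass: reduce via Dilworth's lemma and the negation duality to the single claim that each $c\in C(X)$ is the pointwise meet of $\{a\in A\mid c\le a\}$, and prove that claim by two-point affine interpolation plus a compactness/finite-join argument. All the steps check out, including the associativity of pointwise infima ($\bigwedge_i\bigwedge S_i=\bigwedge\bigcup_i S_i$) that makes the Dilworth reduction legitimate, and you are right that this does not circle back through the density statement the section is aiming at. The one hypothesis you lean on that is not literally in the lemma's wording is that $A$ contains the constant functions; you flag this correctly, and it is indeed the paper's convention (its $\ell$-subalgebras are unital $\mathbb{R}$-subalgebras, and without constants the separation hypothesis would not suffice, as in classical Stone--Weierstrass). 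A cosmetic remark: since the paper only guarantees that $\bigwedge S$ is pointwise for \emph{bounded} $S\subseteq B(X)$, you may prefer to replace each interpolating $a$ by $a\wedge(\sup c+\varepsilon)$, which is still in $A$ and still lies above $c$, so that the witnessing family is bounded; this changes nothing essential. Net effect: your route proves slightly more work by hand but makes Section~3 independent of the external reference, at the cost of repeating a standard compactness argument that \cite{BMO18c} already packages.
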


\begin{remark}
While we do not need it, the converse statements to the statements in Lemma~\ref{lem:closed and open} are true, and are easy to prove (see, e.g., \cite[Thm.~4, p.~362]{Bou89}).
\end{remark}

We are ready to prove (SW$_\ell$).

\begin{theorem}
If $X$ is compact Hausdorff and $A$ is an $\ell$-subalgebra of $C(X)$ which separates points of $C(X)$, then $A$ is uniformly dense in $C(X)$.
\end{theorem}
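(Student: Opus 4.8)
The plan is to combine the two immediately preceding lemmas: Lemma~\ref{lem:closed and open} supplies a large stock of clopen elements relative to $A$, and Lemma~\ref{clopen} places every clopen element inside the uniform closure of $A$. So the proof should be short, with all the substantive work already done.

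First I would note that the point-separation hypothesis on $A$ (read, as in Lemma~\ref{lem:closed and open}, as separating the points of $X$) is exactly what is needed to invoke that lemma. Applying part~(3) of Lemma~\ref{lem:closed and open}, every $h \in C(X)$ is clopen relative to $A$. Then I would apply Lemma~\ref{clopen}: since $X$ is compact Hausdorff, every element of $B(X)$ that is clopen relative to $A$ lies in the uniform closure of $A$ in $B(X)$. Combining the two, every $h \in C(X)$ lies in the uniform closure of $A$.

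To conclude, since $A \subseteq C(X)$ and $C(X)$ is complete in the uniform norm (as recalled in the introduction), hence uniformly closed in $B(X)$, the uniform closure of $A$ taken inside $C(X)$ is all of $C(X)$. That is, $A$ is uniformly dense in $C(X)$, which is the assertion.

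I do not expect a genuine obstacle at this stage, since the real content lies in Lemmas~\ref{clopen} and~\ref{lem:closed and open}, which themselves rest on Condition~(C) and hence on compactness via Theorem~\ref{compact}. The only points needing a moment's care are the bookkeeping about which space's points are being separated, and the observation that for compact Hausdorff $X$ one has $C(X) = C^*(X)$, so that Lemma~\ref{clopen} (phrased for $\ell$-subalgebras of $B(X)$, with the relevant case $A \subseteq C(X) = C^*(X)$) applies verbatim.
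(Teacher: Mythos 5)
Your proposal is correct and follows exactly the paper's own argument: apply Lemma~\ref{lem:closed and open}(3) to see that each element of $C(X)$ is clopen relative to $A$, then Lemma~\ref{clopen} to place it in the uniform closure of $A$. Your extra remarks (reading the separation hypothesis as separating points of $X$, and noting $C(X)=C^*(X)$ in the compact case) are sensible bookkeeping that the paper leaves implicit.
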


\begin{proof}
By Lemma~\ref{lem:closed and open},
elements of $C(X)$ are clopen relative to $A$, so lie in the uniform closure of $A$ in $B(X)$ by Lemma~\ref{clopen}. Thus, $A$ is dense in $C(X)$.
\end{proof}

\begin{remark}
While we have assumed that $A$ is an $\ell$-subalgebra of $B(X)$, it is sufficient to assume that $A$ is simply a vector sublattice of $B(X)$. Indeed, the existence of multiplication on $A$ is not needed in the proofs.
\end{remark}

\def\cprime{$'$}
\providecommand{\bysame}{\leavevmode\hbox to3em{\hrulefill}\thinspace}
\providecommand{\MR}{\relax\ifhmode\unskip\space\fi MR }
\providecommand{\MRhref}[2]{%
  \href{http://www.ams.org/mathscinet-getitem?mr=#1}{#2}
}
\providecommand{\href}[2]{#2}

\end{document}